\documentclass[a4paper,12pt]{amsart}

\usepackage{epsfig}
\usepackage{amsthm,amsfonts}
\usepackage{amssymb,graphicx,color}
\usepackage[all]{xy}
\usepackage{cancel} %\cancel{}, \bcancel{}, \xcancel{}
\usepackage{verbatim}
\usepackage{hyperref}

\setlength{\topmargin}{-20mm}
\setlength{\textwidth}{16cm}
\setlength{\textheight}{248mm}
\setlength{\oddsidemargin}{0cm}
\setlength{\evensidemargin}{0mm}

\newtheorem{theorem}{Theorem}[section]
\newtheorem*{theorem*}{Theorem}
\newtheorem{lemma}[theorem]{Lemma}
\newtheorem{corollary}[theorem]{Corollary}

\newtheorem{definition}[theorem]{Definition}

\newtheorem{remark}[theorem]{Remark}

%Atalhos de objetos matematicos

\newcommand{\R}{\mathbb{R}}

\newcommand{\C}{\mathbb{C}}

\newcommand{\N}{\mathbb{N}}

\newcommand{\Sp}{\mathbb{S}}

\newcommand{\x}{\textbf {x}}

\newcommand{\T}{\widetilde}

\begin{document}

\title[bi-Lip. homeomorphic subanalytic sets have bi-Lip. homeomorphic tangent cones]
{bi-Lipschitz homeomorphic subanalytic sets have bi-Lipschitz homeomorphic tangent cones}
% \title[Lip. equiv. between sets implies in Lip. equiv. of the tangent cones]
% {Lipschitz equivalence between subanalytical sets implies in Lipschitz equivalence of the tangent cones}

\author{J. E. Sampaio}
\address{J. E. Sampaio - 
Departamento de Matem\'atica, Universidade Federal do Cear\'a Av.
Humberto Monte, s/n Campus do Pici - Bloco 914, 60455-760
Fortaleza-CE, Brazil} 
 \email{edsonsampaio@mat.ufc.br}

\keywords{Tangent cones, Subanalytic sets, Lipschitz regularity}
\subjclass[2010]{14B05; 32S50 }
%\thanks{The authors were partially supported by CNPq-Brazil}

\begin{abstract}
We prove that if there exists a bi-Lipschitz homeomorphism (not necessarily subanalytic) between two subanalytic sets, then their tangent cones are bi-Lipschitz homeomorphic. As a consequence of this result, we show that any Lipschitz regular complex analytic set, i.e any complex analytic set which is locally bi-lipschitz homeomorphic to an Euclidean ball must be smooth. Finally, we give an alternative proof of S. Koike and L. Paunescu's result about the bi-Lipschitz invariance of directional dimensions of subanalytic sets.
\end{abstract}

\maketitle
\setcounter{section}{-1}
\section{Introduction}
Geometry and topology of tangent cones to algebraic or analytic sets are very important in several questions of Singularity Theory.  The tangent cones at singular points generalize the notion of tangent spaces at smooth points. In \cite{Whitney:1965a} and \cite{Whitney:1965b} (see also \cite{Whitney:1972}) Whitney established several notions of tangent cones and proved some equivalences for these definitions in the case of complex analytic sets. Modern Real Algebraic Geometry also studies the tangent cones of semialgebraic and subanalytic sets.

In 1971, in his famous paper about open questions in Singularity Theory \cite{Zariski:1971}, Zariski asked the following question: does the topological equivalence of complex algebraic sets imply the topological equivalence of their tangent cones? This question was answered negatively by Fernandez de Bobadilla \cite{Bobadilla:2005}. 
 
In this paper, we study the behavior of tangent cones under bi-Lipschitz map. In Section \ref{section:cones}, we recall the definition of tangent cone and we list some of their properties. In Section \ref{section:mainresult}, we show the main result of the paper, namely: if two subanalytic  sets are bi-Lipschitz homeomorphic, then their tangent cones are also bi-Lipschitz homeomorphic. This was proved by Bernig and Lytchak \cite{BL} under the additional assumption that the bi-Lipschitz homeomorphism is also subanalytic (see also the paper of Birbrair, Fernandes and Neumann \cite{BFN} for an extremely simple proof). This result gives a positive answer to Zariski's question for bi-Lipschitz homeomorphism. 

Finally, in Section \ref{section:applications}, we give some applications of the main result. A generalization of Theorem 3.2 of \cite{BFLS} is proved. Namely, Lipschitz regular (without hypotheses of subanalyticity) complex analytic sets are actually smooth. This result can be considered as an analog of the Theorem of Mumford \cite{M} about the smoothness of topologically regular complex algebraic surfaces. At the end of this paper, we also present a short proof of the result of Koike and Paunescu \cite{Koike:2009} on the bi-Lipschitz invariance of the directional dimension.

\bigskip

\noindent{\bf Acknowledgements}. I wish to thank my thesis advisor Alexandre Fernandes and Lev Birbrair by incentive in this research and also for their support and help. I also wish to thank Vincent Grandjean for corrections and suggestions in writing this article. The result of the paper is part of my PhD thesis at Universidade Federal do Cear\'a.

\section{Preliminaries}\label{section:cones}
\begin{definition}
Let $A\subset \R^n$ be a subanalytic set such that $x_0\in \overline{A}$.
We say that $v\in \R^n$ is a tangent vector of $A$ at $x_0\in\R^n$ if there are a sequence of points $\{x_i\}\subset A\setminus \{x_0\}$ tending to $x_0$ and sequence of positive real numbers $\{t_i\}$ such that 
$$\lim\limits_{i\to \infty} \frac{1}{t_i}(x_i-x_0)= v.$$
Let $C(A,x_0)$ denote the set of all tangent vectors of $A$ at $x_0\in \R^n$. We call $C(A,x_0)$ the {\bf tangent cone} of $A$ at $x_0$.
\end{definition}
Notice that $C(A,x_0)$ corresponds to the cone $C_3(A,x_0)$ defined by Whitney \cite{Whitney:1965a,Whitney:1965b,Whitney:1972}. 

\begin{remark} {\rm It follows from the Curve Selection Lemma for subanalytic sets that, if $A\subset \R^n$ is a subanalytic set and $x_0\in \overline{A}$ then the following holds true }
$$C(A,x_0)=\{v;\, \exists\, \alpha:[0,\varepsilon )\to \R^n\,\, \mbox{s.t.}\,\, \alpha(0)=x_0,\, \alpha((0,\varepsilon ))\subset A\,\, \mbox{and}\,\, \alpha(t)-x_0=tv+o(t)\}.$$
\end{remark}

\begin{remark}
{\rm If $A\subset \C^n$ is a complex analytic set such that $x_0\in A$ then $C(A,x_0)$ is the zero set of a set of complex homogeneous polynomials (see \cite{Whitney:1972}, Theorem 4D). In particular, $C(A,x_0)$ is the union of complex line passing for $x_0$.}
\end{remark}
\begin{remark}
{\rm All the sets considered in the paper are supposed to be equipped with the Euclidean metric. In particular, when we say that $f:X\subset\R^n\rightarrow\R^m$ is a Lipschitz map, we mean that there exists a constant $K>0$ such that }
$$\| f(x_1)-f(x_2)\|\leq K \| x_1-x_2\| \ \forall \ x_1,x_2\in X.$$
\end{remark}

\section{Main result}\label{section:mainresult}
\begin{lemma}\label{extension}
Let $X,Y\subset \R^n$ be subanalytic sets. If $X$ and $Y$ are bi-Lipschitz homeomorphic, then there is a bi-Lipschitz homeomorphism $\varphi:\R^{2n}\to \R^{2n}$ such that $\varphi(X\times\{0\})=\{0\}\times Y$.
\end{lemma}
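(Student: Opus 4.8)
The plan is to build the homeomorphism $\varphi$ of $\R^{2n}$ out of the given bi-Lipschitz homeomorphism $h\colon X\to Y$ together with a suitable Lipschitz extension. First I would invoke a Lipschitz extension theorem — Kirszbraun's theorem — to extend $h\colon X\to\R^n$ to a Lipschitz map $H\colon\R^n\to\R^n$ with the same Lipschitz constant, and likewise extend $h^{-1}\colon Y\to\R^n$ to a Lipschitz map $G\colon\R^n\to\R^n$. Neither $H$ nor $G$ need be injective, so they are not themselves the desired homeomorphism; the trick is to package them into a single map of the doubled space. Concretely, I would define $\varphi\colon\R^{2n}\to\R^{2n}$ by
\[
\varphi(x,y) = \bigl(x - G(y + H(x)),\ y + H(x)\bigr),
\]
thinking of the first $\R^n$ as the source copy and the second as the target copy.

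The key steps are then: (i) check that $\varphi$ is bi-Lipschitz by exhibiting an explicit inverse and bounding Lipschitz constants. The inverse should be $\psi(u,v) = \bigl(u + G(v),\ v - H(u + G(v))\bigr)$; one verifies $\psi\circ\varphi = \mathrm{id}$ and $\varphi\circ\psi=\mathrm{id}$ by direct substitution, and both $\varphi$ and $\psi$ are Lipschitz because they are built by composition and addition from the Lipschitz maps $H$, $G$ and coordinate projections. (ii) Check the normalization condition: if $x\in X$ and $y=0$, then $H(x)=h(x)\in Y$, so $G(y+H(x)) = G(h(x)) = h^{-1}(h(x)) = x$, whence $\varphi(x,0) = \bigl(x-x,\ h(x)\bigr) = (0,h(x))$. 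Since $h$ maps $X$ onto $Y$, this gives $\varphi(X\times\{0\}) = \{0\}\times Y$, as required. (iii) Note that $\varphi$ is automatically a homeomorphism of $\R^{2n}$ because it is a Lipschitz bijection with Lipschitz inverse.

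The main obstacle is really just finding the right algebraic form for $\varphi$ so that both the inverse is visibly Lipschitz and the normalization $\varphi(X\times\{0\})=\{0\}\times Y$ falls out cleanly; the shear-type formula above is engineered precisely so that on $X\times\{0\}$ the composition $G\circ H$ collapses to $h^{-1}\circ h = \mathrm{id}_X$. Everything else — the existence of the Lipschitz extensions and the Lipschitz estimates for compositions — is standard, and subanalyticity of $X$ and $Y$ plays no role in this lemma (it will matter only later, for controlling tangent cones). One small point to record is that Kirszbraun's theorem is stated for maps into Hilbert space, which is exactly our setting $\R^n$, so it applies directly without modification.
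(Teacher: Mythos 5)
Your proposal is correct and follows essentially the same route as the paper: extend $h$ and $h^{-1}$ by Kirszbraun/McShane--Whitney and use the same shear-type formula $\varphi(x,y)=(x-G(y+H(x)),\,y+H(x))$ with the same explicit inverse. The only difference is cosmetic — you spell out the verification of $\varphi(X\times\{0\})=\{0\}\times Y$, which the paper leaves as ``clear.''
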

\begin{proof}
Let $\phi: X\to Y$ be a bi-Lipschitz homeomorphism. By McShane-Whitney-Kirszbraun's Theorem (see \cite{Mcshane:1934}, \cite{Whitney:1934} and \cite{Kirszbraun:1934}) there exists $\T{\phi}: \R^n\to \R^n$ a Lipschitz map such that $\T{\phi}|_X=\phi$ and $\T{\psi}: \R^n\to \R^n$ another Lipschitz map such that $\T{\psi}|_Y=\phi^{-1}$.
Let us define $\varphi, \psi: \R^n\times \R^n\to \R^n\times \R^n$ as follows:
$$
\varphi(x,y)=(x-\T{\psi}(y+\T{\phi}(x)),y+\T{\phi}(x))
$$
and
$$
\psi(z,w)=(z+\T{\psi}(w), w-\T{\phi}(z+\T{\psi}(w))).
$$
Since $\varphi$ and $\psi$ are composition of Lipschitz maps, they are also Lipschitz maps. 

Next, we show that $\psi=\varphi^{-1}$.  In fact, if $(x,y)\in \R^n\times \R^n$ then
$$
\begin{array}{l}
\psi(\varphi(x,y))=\psi(x-\T{\psi}(y+\T{\phi}(x)),y+\T{\phi}(x))\\
= (x-\T{\psi}(y+\T{\phi}(x))+\T{\psi}(y+\T{\phi}(x)),y+\T{\phi}(x)-\T{\phi}(x-\T{\psi}(y+\T{\phi}(x))+\T{\psi}(y+\T{\phi}(x)))\\
= (x,y+\T{\phi}(x)-\T{\phi}(x))\\
= (x,y),
\end{array}
$$
and if $(z,w)\in \R^n\times \R^n$ then
$$
\begin{array}{l}
\varphi(\psi(z,w))=\varphi(z+\T{\psi}(w), w-\T{\phi}(z+\T{\psi}(w)))\\
= (z+\T{\psi}(w)-\T{\psi}(w-\T{\phi}(z+\T{\psi}(w))+\T{\phi}(z+\T{\psi}(w))),w-\T{\phi}(z+\T{\psi}(w))+\T{\phi}(z+\T{\psi}(w)))\\
= (z+\T{\psi}(w)-\T{\psi}(w),w)\\
= (z,w).
\end{array}
$$
Therefore $\psi=\varphi^{-1}$. Finally, it is clear that  $\varphi(X\times \{0\})=\{0\}\times Y$.
\end{proof}
Now we can state and prove our mean result.
\begin{theorem}\label{equivcone}
Let $X,Y\subset \R^n$ be subanalytic sets. If the germs $(X,x_0)$ and $(Y,y_0)$ are bi-Lipschitz homeomorphic, then $(C(X,x),x)$ and $(C(Y,y),y)$ are also bi-lipschitz homeomorphic.
\end{theorem}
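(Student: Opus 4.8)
The plan is to deduce the theorem from Lemma~\ref{extension} by taking a \emph{blow-up limit} (a ``tangent map'') of an ambient bi-Lipschitz homeomorphism at the marked point. First I would translate so that $x_0=y_0=0$; neither the tangent cones nor the bi-Lipschitz type of the germs is affected. If $0$ is isolated in $X$ (equivalently, since $X$ and $Y$ are homeomorphic, in $Y$) the statement is trivial, so assume $0\in\overline{X\setminus\{0\}}$. Applying Lemma~\ref{extension} to representatives of the germs (with the homeomorphism sending $0$ to $0$) gives a bi-Lipschitz homeomorphism $\varphi\colon\R^{2n}\to\R^{2n}$ with $\varphi(X\times\{0\})=\{0\}\times Y$, and from the explicit formula in the proof of that lemma, together with $\phi(0)=0$ and $\phi^{-1}(0)=0$, one checks $\varphi(0)=0$. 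Let $K\geq 1$ be a bi-Lipschitz constant for $\varphi$ and, for $t>0$, put $\varphi_t(u)=\frac1t\varphi(tu)$. Each $\varphi_t\colon\R^{2n}\to\R^{2n}$ is then $K$-bi-Lipschitz, fixes $0$, and satisfies $(\varphi_t)^{-1}=(\varphi^{-1})_t$, where $(\varphi^{-1})_t(w)=\frac1t\varphi^{-1}(tw)$.

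Since $\varphi_t(0)=0$ and $\varphi_t$ is $K$-Lipschitz we get $\|\varphi_t(u)\|\leq K\|u\|$, so $\{\varphi_t\}_{t>0}$ is equi-Lipschitz and uniformly bounded on every compact subset of $\R^{2n}$. By the Arzel\`a--Ascoli theorem and a diagonal argument over balls of integer radius, there is a sequence $t_i\to 0^+$ such that $\varphi_{t_i}$ converges, uniformly on compact sets, to a $K$-Lipschitz map $\Phi\colon\R^{2n}\to\R^{2n}$; passing to a further subsequence, $(\varphi^{-1})_{t_i}$ converges likewise to a $K$-Lipschitz map $\Psi$. To see that $\Psi=\Phi^{-1}$, observe that for each $u$,
$$\|(\varphi^{-1})_{t_i}(\Phi(u))-u\|=\|(\varphi^{-1})_{t_i}(\Phi(u))-(\varphi^{-1})_{t_i}(\varphi_{t_i}(u))\|\leq K\,\|\Phi(u)-\varphi_{t_i}(u)\|\longrightarrow 0,$$
so $\Psi(\Phi(u))=u$, and symmetrically $\Phi(\Psi(w))=w$; hence $\Phi$ is a bi-Lipschitz homeomorphism of $\R^{2n}$.

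The key geometric step is the claim that $\Phi(C(A,0))=C(\varphi(A),0)$ whenever $A\subset\R^{2n}$ is subanalytic, $0\in A$, and $\varphi(A)$ is subanalytic. For ``$\subseteq$'', given $v\in C(A,0)\setminus\{0\}$, use the Curve Selection characterisation of the tangent cone (see the Remark) to choose $\alpha\colon[0,\varepsilon)\to\R^{2n}$ with $\alpha(0)=0$, $\alpha((0,\varepsilon))\subset A$ and $\alpha(t)=tv+o(t)$; then $\varphi(\alpha(t_i))\in\varphi(A)$, $\varphi(\alpha(t_i))\to 0$, and since $\frac{\alpha(t_i)}{t_i}\to v$ and $\varphi_{t_i}\to\Phi$ uniformly near $v$,
$$\frac{1}{t_i}\,\varphi(\alpha(t_i))=\varphi_{t_i}\Big(\frac{\alpha(t_i)}{t_i}\Big)\longrightarrow\Phi(v),$$
so $\Phi(v)\in C(\varphi(A),0)$; the case $v=0$ is covered by $\Phi(0)=0$. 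Repeating the argument with $\varphi^{-1}$ and $\Psi$ in place of $\varphi$ and $\Phi$ gives ``$\supseteq$''. As $\Phi$ is globally bi-Lipschitz, its restriction $\Phi|_{C(A,0)}\colon C(A,0)\to C(\varphi(A),0)$ is a bi-Lipschitz homeomorphism. Applying this with $A=X\times\{0\}$ and noting $C(X\times\{0\},0)=C(X,0)\times\{0\}$ and $C(\{0\}\times Y,0)=\{0\}\times C(Y,0)$, the map $\Phi$ restricts to a bi-Lipschitz homeomorphism $C(X,0)\times\{0\}\to\{0\}\times C(Y,0)$; composing with the evident linear isometries identifies $(C(X,0),0)$ with $(C(Y,0),0)$.

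The main obstacle is the ``moving domains'' phenomenon: the naive rescalings $u\mapsto\frac1t h(tu)$ of the given germ homeomorphism $h\colon(X,0)\to(Y,0)$ are defined only on the shrinking sets $\frac1t X$, which makes both the extraction of a limit and the verification that it is invertible awkward; Lemma~\ref{extension} is precisely the device that removes this difficulty by replacing $h$ with an ambient homeomorphism, so that the blow-ups live on a fixed space. The other delicate point --- and the only place where subanalyticity of $X$ and $Y$ (as opposed to that of the homeomorphism) is used --- is the passage from a tangent \emph{vector} to a tangent \emph{curve} in the key step, which rests on the Curve Selection Lemma.
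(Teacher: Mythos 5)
Your proposal is correct and follows essentially the same route as the paper: apply Lemma~\ref{extension} to replace the germ homeomorphism by an ambient bi-Lipschitz map fixing $0$, extract a blow-up limit of the rescalings $u\mapsto\frac1t\varphi(tu)$ and of their inverses via Arzel\`a--Ascoli, verify the two limits are mutually inverse, and use the Curve Selection characterisation of the tangent cone to show the limit map carries $C(X,0)$ into $C(Y,0)$ (and symmetrically). The only differences are cosmetic --- you work globally on $\R^{2n}$ via a diagonal argument where the paper restricts to $\overline{B_1^m}$ (the paper's subsequent remark notes the global version), and you are slightly more explicit about $\varphi(0)=0$ and about both inclusions.
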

\begin{proof}
We can assume that $x_0=y_0=0$. By Lemma \ref{extension}, we can suppose that there exists a bi-Lipschitz map $\varphi: \R^m\to\R^m$ such that $\varphi(X)=Y$. Let $K>0$ be a constant such that
\begin{equation}
\frac{1}{K}\|x-y\|\leq \|\varphi(x)-\varphi(y)\|\leq K\|x-y\|, \quad \forall x,y\in \R^m.
\end{equation}
Let $\psi=\varphi^{-1}$. Let us define two sequences of maps: for each $n\in\N$, we define the mappings $\varphi_n:\overline{B_1^m}\to \R^m$ and $\psi_n:\overline{B_K^m}\to \R^m$ as follows 
$$\varphi_n(v)=n\varphi(\frac{v}{n}) \quad \mbox{and} \quad \psi_n(v)=n\psi(\frac{v}{n}).$$ 

Observe that for any $n\in \N$, we have
$$
\frac{1}{K}\|u-v\|\leq \|\varphi_n(u)- \varphi_n(v)\|\leq K\|u-v\|, \quad \forall u,v\in \overline{B_1^m},
$$
and 
$$
\frac{1}{K}\|u-v\|\leq \|\psi_n(u)-\psi_n(v)\|\leq K\|u-v\|, \quad \forall u,v\in\overline{B_K^m}.
$$
Then, by Arzela-Ascoli's Theorem, there exists a subsequence $\{n_j\}\subset \N$ and mappings $d\varphi:\overline{B_1^m}\to \R^m$ and  $d\psi: \overline{B_K^m}\to \R^m$ such that $ \varphi_{n_j}\to d\varphi$ and $ \psi_{n_j}\to d\psi$ uniformly as $j\to \infty $. Clearly
$$
\frac{1}{K}\|u-v\|\leq \|d\varphi(u)-d\varphi(v)\|\leq K\|u-v\|
$$
and
$$
\frac{1}{K}\|z-w\|\leq \|d\psi(z)-d\psi(w)\|\leq K\|z-w\|.
$$
Let $U=d\varphi(B_1^m)$. Since $d\varphi$ is a continuous map and injective, $U$ is a open.

\noindent{\bf Claim. } $d\psi\circ d\varphi=id_{B_1^m}$ and $d\varphi\circ d\psi|_U=id_{U}$.

Let $v\in C(X,0)\cap B_1^m$ and $w=d\varphi(v)=\lim \limits_{j\to \infty }\frac{\varphi(t_jv)}{t_j}$ with $t_j=\frac{1}{n_j}$. 
$$
\begin{array}{lllll}
\|d\psi(w)-v\|&=&\big\|\lim \limits_{j\to \infty }\frac{\psi(t_jw)}{t_j} - v\big\| &=&\lim \limits_{j\to \infty }\big\|\frac{\psi(t_jw)}{t_j} - \frac{t_jv}{t_j}\big\|\\
				    &=&\lim \limits_{j\to \infty }\frac{1}{t_j}|\psi(t_jw) - t_jv\| &=&\lim \limits_{j\to \infty }\frac{1}{t_j}\|\varphi^{-1}(t_jw) - \varphi^{-1}(\varphi(t_jv))\|\\
				    &\leq &\lim \limits_{j\to \infty }\frac{K}{t_j}\|t_jw - \varphi(t_jv)\| &\leq &\lim \limits_{j\to \infty }K\big\|w - \frac{\varphi(t_jv)}{t_j}\big\|\\
					&=&0.& &
\end{array}
$$

Hence, $d\psi(w)=d\psi(d\varphi(v))=v$ for all $v\in B_1^m$, i.e., $d\psi\circ d\varphi=id_{B_1^m}$. Similarly, $d\varphi\circ d\psi|_U=id_{U}$.

\noindent{\bf Claim. } $d\varphi(C(X,0)\cap B_1^m)\subset C(Y,0)$ and $d\psi(C(Y,0)\cap d\varphi(B_1^m))\subset C(X,0)$.

Indeed, let $v\in C(X,0)\cap B_1^m$, then there exists $\alpha: [0,\varepsilon )\to X$ such that $\alpha(t)=tv+o(t)$. Thus $\varphi(\alpha(t))\in Y$, for all $t\in [0,\varepsilon )$ and since $\varphi$ is Lipschitz, we have $\varphi(\alpha(t))=\varphi(tv)+o(t)$. On the other hand, by definition of the map $d\varphi$, we get $\varphi(t_jv)=t_jd\varphi(v)+o(t_j)$, hence
$$
d\varphi (v)=\lim \limits_{j\to \infty }\varphi_{n_j}(v)=\lim \limits_{j\to \infty }\frac{\varphi(t_jv)}{t_j}=\lim \limits_{j\to \infty }\frac{\varphi(\alpha(t_j))}{t_j}\in C(Y,0).
$$
Thus $d\varphi(C(X,0)\cap B_1^m)\subset C(Y,0)$. 

Likewise, we have $d\psi(C(Y,0)\cap U)\subset C(X,0)$. Therefore $d\varphi:C(X,0)\cap B_1^m\to C(Y,0)\cap U$ is a bi-Lipschitz map.
\end{proof}
\begin{remark}
{\rm It is worth observing that, it is possible to adapt the proof above in order to get  the map $d\varphi$ being a global bi-Lipschitz homeomorphism on $\R^m$ such that $d\varphi(C(X,0))=C(Y,0)$. In particular, cones which are  bi-Lipschitz homeomorphic as germs at their vertices are globally bi-Lipschitz homeomorphic.}
\end{remark}

\section{Applications}\label{section:applications}
\begin{definition} A subanalytic subset $X\subset\R^n$ is called {\bf Lipschitz regular}  (respectively {\bf subanalytically Lipschitz regular}) at $x_0\in X$ if there is an open neighborhood $U$ of $x_0$ in $X$ which is bi-Lipschitz homeomorphic (respectively subanalytic bi-Lipschitz homeomorphic) to an Euclidean ball. 
\end{definition}
In the joint work of the author \cite{BFLS}, we define the notion of Lipschitz regular complex analytic sets as the sets being subanalytically Lipschitz regular as in the definition above.
\begin{lemma}\label{conesmooth} 
Let $X\subset\C^n$ be a complex analytic subset. Let $x_0\in X$ be such that the reduced tangent cone $C(X,x_0)$ is a linear subspace of $\C^n$. If $X$ is Lipschitz regular at $x_0\in X$, then $x_0$ is a smooth point of $X$.
\end{lemma}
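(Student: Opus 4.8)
The plan is to combine the main theorem of the paper (Theorem \ref{equivcone}) with classical multiplicity/dimension theory for complex analytic sets. The key structural fact I would use is that a complex analytic germ whose (reduced) tangent cone is a linear subspace $L \cong \C^d$ must itself be smooth — this is the multiplicity-one criterion: the multiplicity of $X$ at $x_0$ equals the degree of the projectivized tangent cone, and a linear subspace has degree one, hence multiplicity one, hence (by a theorem going back essentially to Whitney, or via the Nullstellensatz argument that a point of multiplicity one is a manifold point) $x_0$ is smooth. So at first glance the hypothesis on the tangent cone already gives the conclusion with no reference to Lipschitz regularity at all — which suggests the intended statement is really the \emph{combination} used in the next theorem, and the role of this lemma is to isolate exactly that multiplicity-one step in a self-contained way.

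Concretely, here is how I would organize the argument. First, reduce to $x_0 = 0$ and write $d = \dim_\C X$; by hypothesis $C(X,0) = L$ is a $d$-dimensional complex linear subspace. Second, invoke the fact that for a complex analytic set the local multiplicity $m(X,0)$ equals the multiplicity of the tangent cone as a cycle, and the tangent cone being the \emph{reduced} linear space $L$ forces $m(X,0) = 1$. Third, apply the classical theorem that a point of multiplicity one on a complex analytic set is a smooth point (equivalently: a complex analytic germ with smooth tangent cone of the same dimension is smooth — this is where one uses that over $\C$ the tangent cone "sees" the lowest-order part of the defining ideal and multiplicity one means that part is, after a linear change, just the coordinate equations of $L$). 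This yields smoothness of $X$ at $x_0$.

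**The role of Lipschitz regularity**, and presumably the reason the hypothesis appears, is that in the application one does \emph{not} know a priori that $C(X,x_0)$ is a linear subspace; one deduces it from Lipschitz regularity via Theorem \ref{equivcone}: if $X$ is Lipschitz regular at $x_0$ then the germ $(X,x_0)$ is bi-Lipschitz homeomorphic to a Euclidean ball $(\R^k,0)$, whose tangent cone is all of $\R^k$, so by Theorem \ref{equivcone} $C(X,x_0)$ is bi-Lipschitz homeomorphic to $\R^k$ — in particular it is a topological manifold of dimension $k = 2d$. One then needs the separate input that a complex algebraic cone which is a topological manifold (of the right dimension) is in fact a \emph{linear} subspace; this is exactly Prill's theorem (a cone over a projective variety that is a manifold at the vertex is a linear space), or can be extracted from Mumford's / the Bernard–Mostowski-type results. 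So in the proof of this lemma I would either take "$C(X,x_0)$ is linear" as the literal hypothesis and give the short multiplicity argument above, or — if the paper wants the lemma usable directly — first run the Theorem \ref{equivcone} reduction, then Prill, then multiplicity one.

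**The main obstacle** I expect is purely one of citing the right classical statement in the form needed: the passage "reduced tangent cone linear $\Rightarrow$ multiplicity one $\Rightarrow$ smooth" is standard but each implication needs a precise reference (Whitney's book for tangent cones and multiplicity, and the multiplicity-one-implies-smooth statement), and if the Prill step is folded in, one must be careful that "topological manifold of real dimension $2d$" is genuinely enough to conclude linearity for a complex algebraic cone — which it is, by Prill, but only because we also control the dimension. I do not anticipate any hard estimate or construction here; the content is in correctly invoking Theorem \ref{equivcone} and the multiplicity theory, and the write-up should be short.
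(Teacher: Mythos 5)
Your central step --- ``the tangent cone being the reduced linear space $L$ forces $m(X,x_0)=1$'' --- is false, and this is precisely why the Lipschitz regularity hypothesis cannot be dropped from the lemma. The classical identity equates the multiplicity $m(X,x_0)$ with the degree of the tangent cone counted \emph{with multiplicities} (the tangent cone as a cycle, i.e.\ weighting each component by its relative multiplicity), not with the degree of the underlying reduced set. The standard counterexample is the cusp $X=\{y^2=x^3\}\subset\C^2$ at the origin: its reduced tangent cone is the line $\{y=0\}$, a linear subspace, yet $m(X,0)=2$ and the origin is singular. So ``reduced tangent cone linear $\Rightarrow$ multiplicity one $\Rightarrow$ smooth'' breaks at the first implication, and your opening observation that the hypothesis of Lipschitz regularity is redundant is incorrect: the entire content of the lemma is that Lipschitz regularity is what upgrades ``reduced tangent cone is linear'' to ``multiplicity one''.

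The paper's own proof is a one-line citation of Lemma 3.3 and Proposition 2.1 of \cite{BFLS}, whose combined content is exactly this missing step: a degree-theoretic argument showing that at a Lipschitz regular point whose reduced tangent cone is a linear subspace, the (unique) component of the tangent cone has relative multiplicity one, whence $m(X,x_0)=1$ and $x_0$ is a smooth point. Your description of how Theorem \ref{equivcone} and Prill's theorem are used to \emph{verify} the linearity hypothesis is accurate, but that reduction belongs to the subsequent theorem, not to this lemma; here the gap in your argument is precisely the multiplicity-one step, which linearity of the reduced cone alone does not supply.
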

\begin{proof}
It follows directly from the Lemma 3.3 and Proposition 2.1 in \cite{BFLS}.
\end{proof}

\begin{theorem} Let $X\subset\C^n$ be a complex analytic set. If $X$ is Lipschitz regular at $x_0\in X$, then $x_0$ is a smooth point of $X$.
\end{theorem}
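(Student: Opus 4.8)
The plan is to reduce the general statement to Lemma \ref{conesmooth} by showing that a Lipschitz regular point of a complex analytic set must have a tangent cone which is a linear subspace. First I would apply Theorem \ref{equivcone}: since $X$ is Lipschitz regular at $x_0$, some neighborhood of $x_0$ in $X$ is bi-Lipschitz homeomorphic to an open ball $B\subset\R^d$ (where $d$ is the real dimension of $X$ at $x_0$), and by Theorem \ref{equivcone} the tangent cones are bi-Lipschitz homeomorphic. The tangent cone of a ball at an interior point is all of $\R^d$, so $C(X,x_0)$ is bi-Lipschitz homeomorphic to $\R^d$.

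Next I would exploit the special structure of tangent cones of complex analytic sets recalled in the remarks: $C(X,x_0)$ is a complex algebraic cone, i.e. the zero set of finitely many complex homogeneous polynomials, hence a union of complex lines through the origin; in particular it is itself (the germ of) a complex analytic set, and its complex dimension equals the complex dimension of $X$ at $x_0$, say $k$, so its real dimension is $2k=d$. The key point is then: a complex algebraic cone of complex dimension $k$ that is bi-Lipschitz homeomorphic to $\R^{2k}$ must be a $k$-dimensional linear subspace. To see this I would invoke a multiplicity/degree argument — the cone $C(X,x_0)$, being bi-Lipschitz (in particular homeomorphic) to a manifold, is a topological manifold, and a complex algebraic cone which is a topological manifold at its vertex has multiplicity one (by Mumford's theorem in the surface case and its higher-dimensional generalizations, e.g. the results used in \cite{BFLS}), hence is a linear subspace. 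Alternatively, and more in the spirit of this paper, one can argue directly: the link of $C(X,x_0)$ is bi-Lipschitz homeomorphic to a sphere $\Sp^{d-1}$, and a smooth-at-a-point complex cone with simply connected link (for $d\geq 3$) is linear; the low-dimensional cases $d=0,2$ are immediate.

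Having established that $C(X,x_0)$ is a linear subspace of $\C^n$, I would then simply apply Lemma \ref{conesmooth} to conclude that $x_0$ is a smooth point of $X$, which completes the proof.

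I expect the main obstacle to be the middle step: rigorously deducing that a complex analytic cone which is bi-Lipschitz (or merely topologically) a Euclidean space must be linear. The cleanest route is to cite the relevant statement from \cite{BFLS} (where the subanalytic Lipschitz regular case is handled) and observe that its proof only uses that $C(X,x_0)$ is a topological manifold together with the complex-cone structure — neither of which requires the homeomorphism to $X$ to be subanalytic, since Theorem \ref{equivcone} already supplies a (possibly non-subanalytic) bi-Lipschitz homeomorphism of the cones, and being a topological manifold is a topological invariant. Thus the subanalyticity hypothesis on the bi-Lipschitz map, present in \cite{BFLS}, can be dropped entirely, which is exactly the generalization claimed in the introduction.
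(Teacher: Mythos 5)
Your outline coincides with the paper's proof: apply Theorem \ref{equivcone} to conclude that $C(X,x_0)$ is bi-Lipschitz homeomorphic to a Euclidean space, deduce that the cone is therefore a linear subspace of $\C^n$, and finish with Lemma \ref{conesmooth}. The step you flag as the main obstacle --- that a complex cone which is a topological manifold near its vertex must be linear --- is precisely the content of Prill's theorem \cite{P} (``Cones in complex affine space are topologically singular''), which is the single citation the paper uses here; no multiplicity computation is needed. Two cautions about the substitutes you propose. First, there is no ``higher-dimensional generalization of Mumford's theorem'' for arbitrary singularities: Brieskorn \cite{Brieskorn:1966} constructed singular normal complex spaces that are topological manifolds, so the cone structure is essential and it is Prill, not Mumford \cite{M}, that must be invoked. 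Second, your alternative claim that a complex cone, smooth off the vertex, with simply connected link must be linear is false: the quadric cone $x_1^2+\cdots+x_n^2=0$ in $\C^n$ for $n\geq 4$ has simply connected link (a Stiefel manifold) but is not a linear subspace; Prill's argument uses more than $\pi_1$ of the link. Your fallback --- quoting the corresponding step of \cite{BFLS} and observing that it only uses the topological manifold property of the cone, which Theorem \ref{equivcone} delivers without any subanalyticity of the homeomorphism --- is sound, since that step of \cite{BFLS} is itself an application of Prill's theorem.
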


\begin{proof} Let us suppose that $X$ is Lipschitz regular at $x_0$. Let $C(X,x_0)$ be the tangent cone of $X$ at $x_0$. Let $\phi\colon U\rightarrow B$ be a bi-Lipschitz homeomorphism where $U$ is a neighborhood of $x_0$ in $X$ and $B\subset\R^N$ is an Euclidean ball centered at origin $0\in\R^N$. By Theorem \ref{equivcone}, $(C(X,x_0),x_0)$ is bi-Lipschitz homeomorphic the $(C(B,0),0)=(\R^N,0)$.
By the Theorem of Prill (see \cite{P}), the reduced cone $C(X,x_0)$ is a linear subspace of $\C^n$. We finish the proof using Lemma \ref{conesmooth}.
\end{proof}
\begin{corollary}[\cite{BFLS}, Theorem 3.2]
Let $X\subset\C^n$ be a complex algebraic set. If $X$ is subanalytically Lipschitz regular at $x_0\in X$, then $x_0$ is a smooth point of $X$.
\end{corollary}

From now, we approach  a result of Koike and Paunescu about the bi-Lipschitz invariance of the dimension of directional set of subanalytic germs. First of all, let us introduce the definition of directional set.
  
\begin{definition}
Let $A\subset \R^n$ be a set-germ at $0\in \R^n$ such that $0\in \overline{A}$. 
We say that $\x\in \Sp^{n-1}$ is a direction of $A$ at $0\in\R^n$ if there is a sequence of points $\{x_i\}\subset A\setminus \{0\}$ converging to $0\in \R^n$ such that $\frac{x_i}{\|x_i\|}\to \x$ as $i\to \infty $. 
Let $D(A)$ denote the set of all directions of $A$ at $0\in \R^n$.
\end{definition}
\begin{remark}\label{remarkcone}
{\rm The link of the cone $C(A,0)$ is $D(A)$, i.e., $D(A)=C(A,0)\cap \Sp^{n-1}$.}
\end{remark}

\begin{theorem}[\cite{Koike:2009}, Main Theorem]
Let $A,B\subset \R^n$ be subanalytic set-germs at $0\in \R^n$ such that $0\in \overline{A}\cap \overline{B}$, and $h:(\R^n,0)\to (\R^n,0)$ be a bi-Lipschitz homeomorphism. Suppose that $h(A)$, $h(B)$ are also subanalytic. The following equality of dimensions holds 
$$
\dim (D(h(A))\cap D(h(B)))=\dim (D(A)\cap D(B)).
$$
\end{theorem}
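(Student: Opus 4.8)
The plan is to deduce the theorem from Theorem \ref{equivcone} together with the interpretation of $D(\cdot)$ as the link of the tangent cone (Remark \ref{remarkcone}). The subtlety is that a bi-Lipschitz homeomorphism $h$ does \emph{not} directly act on a single set; rather, we must track what happens to the pair $(A,B)$ simultaneously, and then show that intersections of directional sets are preserved. So the key object to analyze is not $D(A)$ and $D(B)$ in isolation, but the tangent cone of the intersection-type data $A\cap B$, $A\cup B$, and the way $h$ transports them.

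First I would reduce to the following: it suffices to show that for any subanalytic germs $(A,0)$ and $(B,0)$ and any bi-Lipschitz $h$ with $h(A),h(B)$ subanalytic, one has $\dim D(h(A)\cap h(B)) = \dim D(A\cap B)$, provided we \emph{also} know $C(A\cap B,0) = C(A,0)\cap C(B,0)$-type statements fail in general, so instead I would work with $C(h(A),0)$ and $C(h(B),0)$ obtained from the \emph{same} derived map. Concretely: apply the construction in the proof of Theorem \ref{equivcone} to $h$ itself (not to $h|_A$), extended via Lemma \ref{extension} if needed, to produce a bi-Lipschitz $d\varphi:\R^n\to\R^n$ (using the global version noted in the Remark after Theorem \ref{equivcone}) along a \emph{single} subsequence $\{t_j\}$ with $d\varphi(C(A,0)) = C(h(A),0)$ and $d\varphi(C(B,0)) = C(h(B),0)$ \emph{simultaneously}. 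The point is that the limit map $d\varphi$ depends only on $h$ and the choice of subsequence, not on $A$ or $B$; since any subsequence has a further convergent subsequence, one can extract a single $\{t_j\}$ that works for both $A$ and $B$ at once.

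Granting such a common $d\varphi$, the homeomorphism $d\varphi$ carries $C(A,0)\cap C(B,0)$ bijectively onto $C(h(A),0)\cap C(h(B),0)$. Now I would show $C(A,0)\cap C(B,0) \supset C(A\cap B,0)$ trivially, but the reverse inclusion is false, so instead I would argue directly at the level of directions: by Remark \ref{remarkcone}, $D(A)\cap D(B) = (C(A,0)\cap C(B,0))\cap\Sp^{n-1}$, hence $D(A)\cap D(B)$ is carried by the (positively homogeneous, bi-Lipschitz) map $d\varphi$ onto a subset of $\Sp^{n-1}$ that is bi-Lipschitz homeomorphic to $D(h(A))\cap D(h(B))$ — one must renormalize $d\varphi(x)$ to $d\varphi(x)/\|d\varphi(x)\|$, which is again bi-Lipschitz on the sphere because $d\varphi$ is bi-Lipschitz and $1$-homogeneous, so $\|d\varphi(x)\|$ is bounded above and below on $\Sp^{n-1}$. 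Since bi-Lipschitz homeomorphic subanalytic sets have the same dimension (bi-Lipschitz maps preserve Hausdorff dimension, and $D(A)\cap D(B)$ is subanalytic as $A,B$ and hence $C(A,0),C(B,0)$ are), we conclude $\dim(D(A)\cap D(B)) = \dim(D(h(A))\cap D(h(B)))$, which is the claim (writing $h(A),h(B)$ in place of the generic subanalytic sets and using that $h$ is a homeomorphism of $(\R^n,0)$).

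The main obstacle I anticipate is the simultaneity step: ensuring one convergent subsequence $\{t_j\}$ governs the tangent-cone convergence for \emph{both} $A$ and $B$, and more importantly verifying that the blow-up limit $d\varphi$ genuinely sends $C(A,0)$ onto $C(h(A),0)$ and not merely into it — this is exactly the content of the two Claims in the proof of Theorem \ref{equivcone}, but one must re-examine that the argument there, which used $\varphi(X)=Y$ globally, still applies when $d\varphi$ is built from the ambient $h$ and $C(h(A),0)$ is read off as $d\varphi(C(A,0))$; the surjectivity onto $C(h(A),0)$ then needs the inverse map $d\psi$ and the identity $d\psi\circ d\varphi = \mathrm{id}$, exactly as in Theorem \ref{equivcone}. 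A secondary technical point is confirming subanalyticity of the directional sets so that ``bi-Lipschitz implies equidimensional'' is available; this follows since the tangent cone of a subanalytic set is subanalytic and intersection with the sphere is a subanalytic operation.
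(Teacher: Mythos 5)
Your overall route is the paper's route: blow up the ambient map $h$ itself along a single subsequence (no extension via Lemma \ref{extension} is needed, since $h$ is already defined on $(\R^n,0)$), obtain one bi-Lipschitz limit map $dh$ that simultaneously satisfies $dh(C(A,0)\cap B_1^n)=C(h(A),0)\cap U$ and $dh(C(B,0)\cap B_1^n)=C(h(B),0)\cap U$, and then use that a bi-Lipschitz homeomorphism preserves intersections and dimension. You correctly identify the simultaneity of the subsequence as the key point (the paper invokes Theorem \ref{equivcone} somewhat tersely here; the justification is exactly the one you give, namely that the limit map depends only on $h$ and the chosen subsequence, not on the subanalytic set being tracked).

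However, your final step contains a genuine error: you assert that $d\varphi$ is positively $1$-homogeneous and use this to make $x\mapsto d\varphi(x)/\|d\varphi(x)\|$ a bi-Lipschitz map between the links. A blow-up limit along a subsequence need not be homogeneous. Writing $\varphi_t(v)=t^{-1}\varphi(tv)$, one has $\varphi_t(\lambda v)=\lambda\,\varphi_{\lambda t}(v)$, and $\lambda t_j$ is not a subsequence of $t_j$, so convergence along $\{t_j\}$ gives no control at $\lambda v$; a concrete obstruction is $\varphi(x)=x\bigl(1+\varepsilon\sin(\log|x|)\bigr)$ on $\R$, whose blow-up limits along $\log t_j\to\theta$ are $v\mapsto v\bigl(1+\varepsilon\sin(\log|v|-\theta)\bigr)$, which are bi-Lipschitz but not homogeneous. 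Without homogeneity, $d\varphi$ need not send rays to rays, and the renormalized map can fail to be injective, so bi-Lipschitzness of the induced map on links is not established. The gap is localized and repairable exactly as in the paper: instead of passing to the sphere via $d\varphi$, compare dimensions at the level of cones, noting that $\dim\bigl(C(A,0)\cap C(B,0)\cap B_1^n\bigr)=\dim\bigl(C(A,0)\cap C(B,0)\bigr)=\dim\bigl(D(A)\cap D(B)\bigr)+1$ (open subsets containing the vertex do not change the dimension of a cone, and the dimension of a nonempty cone exceeds that of its link by one), and likewise for the images; the bi-Lipschitz map $dh$ then gives equality of the cone dimensions, from which the equality of the link dimensions follows.
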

\begin{proof}
By Theorem \ref{equivcone}, there exists a bi-Lipschitz homeomorphism $dh:B_1^n\to U$ with $U:=dh(B_1^n)$ being an open set of $\R^n$ and such that
$$dh(C(A,0)\cap B_1^n)=C(h(A),0)\cap U$$
and
$$ dh(C(B,0)\cap B_1^n)=C(h(B),0)\cap U.$$ 
Since $dh$ is a homeomorphism, $dh$ preserves intersection and then 
\begin{eqnarray*}
dh(C(A,0)\cap C(B,0)\cap B_1^n)&=& dh(C(A,0)\cap B_1^n)\cap dh(C(B,0)\cap B_1^n)\\
							   &=& C(h(A),0)\cap C(h(B),0)\cap U
\end{eqnarray*}
and since $dh$ preserves dimension, we obtain that
$$\dim (C(h(A),0)\cap C(h(B),0)\cap B_1^n)=\dim (C(A,0)\cap C(B,0)\cap U).$$
But 
$$\dim (C(h(A),0)\cap C(h(B),0)\cap B_1^n)=\dim (C(h(A),0)\cap C(h(B),0))$$ 
and 
$$\dim (C(A,0)\cap C(B,0)\cap U)=\dim (C(A,0)\cap C(B,0)),$$
since $B_1^n$ and $U$ are open and the intersections above are not empty. Then 
$$\dim (C(h(A),0)\cap C(h(B),0))=\dim (C(A,0)\cap C(B,0)),$$
and therefore $\dim (D(h(A))\cap D(h(B)))=\dim (D(A)\cap D(B))$.
\end{proof}
% \begin{corollary}[\cite{Koike:2009}, Theorem 1.1]
% Let $h:(\R^n,0)\to (\R^n,0)$ be a bi-Lipschitz homeomorphism. If $A$, $h(A)$ are subanalytic set-germs at $0\in \R^n$, then $\dim D(h(A))=\dim D(A)$.
% \end{corollary}

\end{document}